\documentclass{article}
\usepackage{amssymb,amsmath,amsfonts,amsthm}
\usepackage{latexsym}
\usepackage{mathrsfs}
\usepackage{graphics}
\usepackage{tikz}
\usetikzlibrary{shapes,arrows,calc}
\usepackage{indentfirst}

\usepackage{hyperref}
\usepackage{enumerate}
\usepackage{mathtools}
\usepackage{comment}
\usepackage{xparse}
\usepackage{bm}
\usepackage{subcaption}
\usepackage{caption}
\usepackage{graphicx} 
\usetikzlibrary{positioning} 
\hypersetup{colorlinks = true, linkcolor = blue, citecolor = blue, urlcolor = blue}
\usepackage{float}
\usepackage[section]{placeins}
\usepackage{lineno}

\allowdisplaybreaks

\newtheorem{innercustomthm}{Theorem}
\newenvironment{customthm}[1]
  {\renewcommand\theinnercustomthm{#1}\innercustomthm}
  {\endinnercustomthm}

\newtheorem*{thm*}{Theorem}
\newtheorem{thm}{Theorem}
\newtheorem{lem}[thm]{Lemma}
\newtheorem{pro}[thm]{Proposition}

\newtheorem{cor}[thm]{Corollary}
\newtheorem{conj}[thm]{Conjecture}

\newtheorem{ques}[thm]{Question}

\newcommand{\N}{\mathbb{N}}
\newcommand{\Z}{\mathbb{Z}}

\newcommand{\col}{\mathrm{col}}

\usepackage[margin=1in]{geometry}
\usepackage{amsmath,amssymb}
\usepackage{amsthm}

\newtheorem*{claim*}{Claim}

\usepackage{xparse} 
\usepackage{etoolbox}
\NewDocumentCommand{\angles}{mmO{1}}{%
  \ensuremath{%
    \ifcase\numexpr#3\relax
      \PackageError{angles}{Nesting level must be >= 1}{You passed #3}%
    \or
      #1\langle #2 \rangle
    \or
      #1\langle\langle #2 \rangle\rangle
    \or
      #1\langle\langle\langle #2 \rangle\rangle\rangle
    \else
      \PackageError{angles}{Unsupported nesting level #3}{Only nesting levels 1--3 are supported}
    \fi
  }%
}
\usetikzlibrary{fit,matrix,positioning,backgrounds,shapes.geometric}
\newsavebox{\KTwoFourFigureBox}

\begin{document}
\title{Improved Bounds on Proportional Choice Numbers}

\author{Jeffrey A. Mudrock \thanks{Department of Mathematics and Statistics, University of South Alabama, Mobile, AL, USA (mudrock@southalabama.edu)}
}
\maketitle
\begin{abstract}

Proportional choosability is a list analogue of equitable coloring introduced in 2019 that imposes both upper and lower bounds on color class sizes based on the availability of each color. The proportional choice number of a graph $G$, denoted by $\chi_{pc}(G)$, is the smallest positive integer $k$ such that $G$ is proportionally $k$-choosable. We consider two previously posed questions regarding proportional choice numbers: (1) Is there a constant $C$ such that $\chi_{pc}(P_n) \leq C$ for every $n \in \N$? and (2) Is it the case that for any graph $G$, $\chi_{pc}(G) \leq \Delta(G)+1$ where $\Delta(G)$ is the maximum degree of $G$? In this paper we answer both questions: the first in the affirmative and the second in the negative.  Specifically, we prove that $\chi_{pc}(G) \leq 2^{\chi'(G)}$ for every graph $G$ where $\chi'(G)$ is the chromatic index of $G$. Consequently, $\chi_{pc}(P_n) \leq 4$ for every $n \in \N$. We also prove that when $G$ is the disjoint union of $(a+b-1)$ copies of $K_{a,b}$, $\chi_{pc}(G)=a+b$. In particular, for each $m \in \N$, there is a graph $G$ with $\Delta(G)=m$ and $\chi_{pc}(G)=2\Delta(G)$.

\smallskip
\noindent {\bf Keywords.}  graph coloring, equitable coloring, list coloring, proportional coloring

\smallskip   
\noindent \textbf{Mathematics Subject Classification.}  05C15, 05C70

\end{abstract}

\section{Introduction} \label{intro}

In this paper, all graphs are nonempty, finite, simple graphs.  Generally speaking, we follow West~\cite{W01} for terminology and notation.  The set of natural numbers is $\N = \{1,2,3, \ldots \}$.  For $m \in \N$, we write $[m]$ for the set $\{1, \ldots, m \}$, and we let $[0] = \emptyset$.  We write $K_{l,n}$ for complete bipartite graphs with partite sets of size $l$ and $n$.  When we write $G = q K_{l,n}$ we mean that $G$ is $q$ pairwise vertex disjoint copies of $K_{l,n}$.  For a graph $G$, we use $d_G(v)$ for the degree of a vertex $v$ in $G$, and we use $\Delta(G)$ for the maximum degree of the vertices in $G$.  When $u,v \in V(G)$ are adjacent in $G$, we write $uv$ or $vu$ for the edge connecting $u$ and $v$.  When $G_1$ and $G_2$ are vertex disjoint we write $G_1 + G_2$ or $\sum_{i=1}^2 G_i$ for their disjoint union.  

\subsection{Equitable List Coloring and Proportional Coloring}

The study of equitable coloring began with a conjecture of Erd\H{o}s in 1964~\cite{E64}, although the concept was not formally introduced until Meyer's work in 1973~\cite{M73}. An \emph{equitable $k$-coloring} of a graph $G$ is a proper $k$-coloring $f$ such that the sizes of any two color classes differ by at most one. Here, we adopt the convention that a $k$-coloring has exactly $k$ color classes, some of which may be empty. Consequently, each color class in an equitable $k$-coloring has size either $\lceil |V(G)|/k \rceil$ or $\lfloor |V(G)|/k \rfloor$. We say that a graph $G$ is \emph{equitably $k$-colorable} if it admits an equitable $k$-coloring. 

List coloring was introduced independently by Vizing~\cite{V76} and Erd\H{o}s, Rubin, and Taylor~\cite{ER80} in the 1970s. A \emph{list assignment} $L$ for a graph $G$ is a function that assigns to each vertex of $G$ a list of colors. If $|L(v)|=k$ for every $v \in V(G)$, then $L$ is called a \emph{$k$-assignment} for $G$. A graph $G$ is \emph{$L$-colorable} if there exists a proper coloring $f$ of $G$ such that $f(v) \in L(v)$ for every $v \in V(G)$. Such a coloring is called a \emph{proper $L$-coloring} of $G$. We say that $G$ is \emph{$k$-choosable} if it admits a proper $L$-coloring for every $k$-assignment $L$ of $G$. Both list coloring and equitable coloring have been extensively studied since their introduction.

In 2003, Kostochka, Pelsmajer, and West introduced a list analogue of equitable coloring, called \emph{equitable choosability}~\cite{KP03}, which has since received considerable attention (e.g.,~\cite{KM18, KM22, KK13, MC21, ZB15}). Suppose that $L$ is a $k$-assignment for a graph $G$. A proper $L$-coloring of $G$ is \emph{equitable} if each color appears on at most $\lceil |V(G)|/k \rceil$ vertices. Such a coloring is called an \emph{equitable $L$-coloring} of $G$, and we say that $G$ is \emph{equitably $L$-colorable} if it admits an equitable $L$-coloring. We say that $G$ is \emph{equitably $k$-choosable} if $G$ is equitably $L$-colorable for every $k$-assignment $L$ of $G$. Observe that equitable choosability imposes the same upper bound on color class sizes as equitable coloring, but it does not require the sizes of the color classes to be balanced.

In 2019, another list analogue of equitable coloring, called \emph{proportional choosability}~\cite{KM19}, was introduced. It imposes both upper and lower bounds on color class sizes based on the availability of each color. Since a color may appear in as few as one list, we can't require every color class to have size at least $\lfloor |V(G)|/k \rfloor$. Instead, proportional choosability uses bounds that depend on the number of lists containing each color.

Let $L$ be a list assignment for a graph $G$. The \emph{palette of colors associated with $L$} is the set $\bigcup_{v \in V(G)} L(v)$, which we denote by $\mathcal{L}$ throughout this paper. For each color $c \in \mathcal{L}$, the \emph{multiplicity of $c$ in $L$}, denoted by $\eta_L(c)$ (or simply $\eta(c)$ when $L$ is clear from context), is the number of lists in which $c$ appears. In particular, $\eta_L(c) = |\{v \in V(G) : c \in L(v)\}|$.

Suppose that $L$ is a $k$-assignment for a graph $G$. A proper $L$-coloring $f$ of $G$ is called a \emph{proportional $L$-coloring} if, for every $c \in \mathcal{L}$, the color class $f^{-1}(c)$ has size either $\lfloor \eta(c)/k \rfloor$ or $\lceil \eta(c)/k \rceil$. We say that $G$ is \emph{proportionally $L$-colorable} if it admits a proportional $L$-coloring.  Furthermore, we say that $G$ is \emph{proportionally $k$-choosable} if it is proportionally $L$-colorable for every $k$-assignment $L$ of $G$.

We now recall several basic properties of proportional $k$-choosability. Proofs and further discussion of these properties can be found in~\cite{KM19,M18a}.  For the first property, recall that if a graph is $k$-choosable, then it is $k$-colorable. However, the analogous implication fails for equitable choosability: a graph may be equitably $k$-choosable without being equitably $k$-colorable. More recently, Kierstead, Kostochka, and Xiang introduced \emph{strong equitable choosability}, which addresses this shortcoming~\cite{KK25}. Specifically, suppose $L$ is a $k$-assignment for a graph $G$, and write $|V(G)|=qk+r$, where $0\leq r<k$. An equitable $L$-coloring is \emph{strongly equitable} if at most $r$ color classes have size $q+1$. A graph $G$ is \emph{strongly equitably $k$-choosable} if it admits a strongly equitable $L$-coloring for every $k$-assignment $L$ for $G$. Strong equitable $k$-choosability implies both equitable $k$-choosability and equitable $k$-colorability. Proportional choosability also guarantees both properties as the following result shows.

\begin{pro}[\cite{KM19}] \label{pro: motivation}
If $G$ is proportionally $k$-choosable, then $G$ is both equitably $k$-choosable and equitably $k$-colorable.
\end{pro}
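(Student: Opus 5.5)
Both conclusions follow directly from the definition of a proportional $L$-coloring, applied to suitable $k$-assignments. Throughout, let $n=|V(G)|$ and assume $G$ is proportionally $k$-choosable.

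\emph{Equitable $k$-choosability.} Let $L$ be an arbitrary $k$-assignment for $G$. By hypothesis there is a proportional $L$-coloring $f$ of $G$. For each color $c\in\mathcal{L}$, the color $c$ appears in at most all $n$ lists, so $\eta(c)\le n$. Hence
\[
|f^{-1}(c)| \le \lceil \eta(c)/k\rceil \le \lceil n/k\rceil .
\]
Colors outside $\mathcal{L}$ are never used. Thus $f$ is a proper $L$-coloring in which every color appears on at most $\lceil n/k\rceil$ vertices, i.e.\ an equitable $L$-coloring. Since $L$ was arbitrary, $G$ is equitably $k$-choosable.

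\emph{Equitable $k$-colorability.} Apply the hypothesis to the constant assignment $L(v)=[k]$ for every $v\in V(G)$. Every color $c\in[k]$ then has $\eta(c)=n$. A proportional $L$-coloring $f$ is therefore a proper coloring using colors from $[k]$ in which each class $f^{-1}(c)$, $c\in[k]$, has size $\lfloor n/k\rfloor$ or $\lceil n/k\rceil$. It has exactly $k$ color classes, some possibly empty, and any two class sizes differ by at most one. So $f$ is an equitable $k$-coloring of $G$.

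There is no real obstacle. The only point needing care is that the paper's convention allows empty color classes; this is consistent with the case $\lfloor n/k\rfloor=0$ when $n<k$, so no separate case analysis is needed.
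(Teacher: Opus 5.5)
Your proof is correct and is the standard direct argument. The paper does not prove this proposition itself; it cites it from the original proportional choosability paper, where the argument is the same: bound $|f^{-1}(c)| \le \lceil \eta(c)/k\rceil \le \lceil |V(G)|/k\rceil$ for an arbitrary $k$-assignment, and apply the hypothesis to the constant assignment $L(v)=[k]$, where every color has multiplicity $|V(G)|$.
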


The next two properties highlight additional advantages of proportional choosability. In particular, proportional choosability is monotone in $k$ and is also a monotone graph property, meaning that it is preserved under taking subgraphs. While these properties hold for ordinary colorability and choosability, neither holds in general for equitable choosability or strong equitable choosability. For example, $K_{2,3,3}$ is strongly equitably $3$-choosable (using its $3$-choosability~\cite{GM98}) but not strongly equitably $4$-choosable, and $K_{1,3}\cup K_1$ is strongly equitably $2$-choosable even though its subgraph $K_{1,3}$ is not.

\begin{pro}[\cite{KM19}] \label{pro: monoink}
If $G$ is proportionally $k$-choosable, then $G$ is proportionally $(k+1)$-choosable.
\end{pro}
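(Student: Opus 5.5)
The plan is to reduce a $(k+1)$-assignment to a $k$-assignment by deleting exactly one color from each list. The deletions must be spread so evenly among the colors that every proportional coloring for the smaller assignment is automatically proportional for the original one. So let $G$ be proportionally $k$-choosable and let $L$ be an arbitrary $(k+1)$-assignment for $G$. For each color $c \in \mathcal{L}$ write $\eta(c) = \eta_L(c) = q_c(k+1) + r_c$ with $0 \le r_c \le k$. I will choose, for every $v \in V(G)$, a color $d(v) \in L(v)$ such that for every $c \in \mathcal{L}$ the number $t_c = |\{v : d(v) = c\}|$ lies in $\{\lfloor \eta(c)/(k+1)\rfloor, \lceil \eta(c)/(k+1)\rceil\}$. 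Then I set $L'(v) = L(v) \setminus \{d(v)\}$, which is a $k$-assignment with $\eta_{L'}(c) = \eta(c) - t_c$.

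The arithmetic check is routine. If $t_c = q_c$, then $\eta_{L'}(c) = q_c k + r_c$ with $0 \le r_c \le k$. Hence $\lfloor \eta_{L'}(c)/k \rfloor$ and $\lceil \eta_{L'}(c)/k\rceil$ both lie in $\{q_c, q_c+1\}$, and both equal $q_c$ when $r_c = 0$. If $r_c \ge 1$ and $t_c = q_c+1$, then $\eta_{L'}(c) = q_c k + (r_c - 1)$ with $0 \le r_c - 1 \le k-1$, and again both rounded values lie in $\{q_c, q_c+1\}$. In every case $\{q_c, q_c+1\}$ (or $\{q_c\}$ when $r_c=0$) is exactly $\{\lfloor \eta(c)/(k+1)\rfloor, \lceil \eta(c)/(k+1)\rceil\}$. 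Colors with $\eta_{L'}(c)=0$ get class size $0$, which is also consistent. So a proportional $L'$-coloring $f$, which exists by hypothesis, is a proper $L$-coloring whose color classes have the sizes required for a proportional $L$-coloring.

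The main step is the existence of the deletion function $d$. I will model it as a flow problem in the bipartite incidence graph $H$ with parts $V(G)$ and $\mathcal{L}$, where $v$ is joined to $c$ if and only if $c \in L(v)$.
\begin{itemize}
\item Add a source $s$ with an arc $s \to v$ of lower and upper capacity $1$ for each $v \in V(G)$.
\item Give each arc $v \to c$ of $H$ capacity $[0,1]$.
\item Add a sink $t$ with an arc $c \to t$ of capacity $[\lfloor \eta(c)/(k+1)\rfloor, \lceil \eta(c)/(k+1)\rceil]$ for each $c \in \mathcal{L}$.
\end{itemize}
Sending $1/(k+1)$ along every edge $v \to c$ gives a feasible fractional flow: each $v$ has exactly $k+1$ outgoing arcs and so receives $1$, while each $c$ receives exactly $\eta(c)/(k+1)$, which lies within its bounds. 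By the integrality theorem for flows with integer lower and upper bounds (equivalently, total unimodularity of the network matrix), a feasible fractional flow implies a feasible integral flow. In an integral flow each $v$ sends its unit to exactly one color $d(v) \in L(v)$, and $t_c$ is the flow into $c$, which lies in the required range.

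I expect this rounding step to be the only real obstacle. If one wishes to avoid citing flow integrality, an alternative is a direct Hall-type argument: view each color $c$ as $\lfloor \eta(c)/(k+1)\rfloor$ mandatory slots plus one optional slot when $r_c \ge 1$, find a matching saturating $V(G)$ that uses all mandatory slots, and verify Hall's condition from the fractional solution. The flow formulation is cleaner, though, and it is the version I would write.
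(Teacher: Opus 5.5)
Your proof is correct and complete. The fractional flow with value $1/(k+1)$ on every arc $v\to c$ is feasible. The constraint matrix is a network matrix, so an integral feasible flow exists and yields the deletion function $d$. Your case analysis correctly shows that every proportional $L'$-coloring is a proportional $L$-coloring, including the edge cases $r_c=0$ (where $t_c=q_c$ is forced) and $\eta_{L'}(c)=0$.

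The paper does not reprove this proposition; it only cites it from \cite{KM19}, so there is no in-paper proof to compare against. Your argument is nonetheless the same device the paper uses to prove Theorem~\ref{thm: match}. There, too, one takes the uniform fractional solution ($x_e=1/2$ there, $1/(k+1)$ here) and rounds it by total unimodularity to a sub-assignment $L'$ whose multiplicities are controlled color by color. One then checks that proportionality for $L'$ forces proportionality for $L$.

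If you want to avoid polyhedral arguments, your ``mandatory/optional slot'' idea can be made fully combinatorial with K\"onig's edge-coloring theorem instead of Hall's condition:
\begin{enumerate}
\item For each color $c$, split the $\eta(c)$ vertices whose lists contain $c$ into $q_c$ blocks of size $k+1$, plus one block of size $r_c$ if $r_c\ge 1$.
\item Join each $v\in V(G)$ to the blocks containing it. The resulting bipartite graph has maximum degree $k+1$, and every $v$ and every full block has degree exactly $k+1$.
\item By K\"onig's theorem it has a proper $(k+1)$-edge-coloring. Any single color class is therefore a matching covering all of $V(G)$ and every full block.
\item Let $d(v)$ be the color of the block matched to $v$. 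Then $t_c\in\{q_c,q_c+1\}$, with $t_c=q_c$ when $r_c=0$.
\end{enumerate}
The flow/LP formulation is the one that adapts directly to the halving reduction of Theorem~\ref{thm: match}. The K\"onig version is more elementary and needs no integrality theorem.
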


\begin{pro}[\cite{KM19}] \label{lem: monotone}
Suppose $H$ is a subgraph of $G$. If $G$ is proportionally $k$-choosable, then $H$ is proportionally $k$-choosable.
\end{pro}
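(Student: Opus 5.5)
Let $H$ be a subgraph of $G$, and let $L$ be an arbitrary $k$-assignment for $H$. The plan is to extend $L$ to a $k$-assignment $L'$ for $G$ so that a proportional $L'$-coloring of $G$ restricts to a proportional $L$-coloring of $H$. First, note that removing edges never destroys properness: a proper coloring of $G$ restricted to $V(H)$ is a proper coloring of $H$. So it suffices to handle the case where $H$ has the same edges as $G$ on $V(H)$, i.e.\ the case of deleting vertices. Indeed, a proportional $L$-coloring of the spanning subgraph $G'=(V(G),E(H)\cup\ldots)$ is handled by the same argument, since the color class size conditions depend only on $L$ and not on the edges. Concretely, the only thing that matters is that a proportional $L'$-coloring of $G$, restricted to $H$, is proper for $H$ and has the right class sizes.

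\textbf{Extension.} For each vertex $v\in V(G)\setminus V(H)$, assign $L'(v)$ to be a set of $k$ brand new colors. These colors do not appear in any other list, and distinct such vertices receive pairwise disjoint new colors. On $V(H)$, set $L'=L$.

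\textbf{Multiplicities and bounds.} For every $c\in\mathcal{L}$ we have $\eta_{L'}(c)=\eta_L(c)$, so the required bounds $\lfloor \eta(c)/k\rfloor$ and $\lceil \eta(c)/k\rceil$ are the same for $L$ and $L'$. Each new color $c$ has $\eta_{L'}(c)=1$, so its class must have size $0$ or $1$, and this is automatically satisfied.

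\textbf{Restriction.} Since $G$ is proportionally $k$-choosable, there is a proportional $L'$-coloring $f$ of $G$. Let $g=f|_{V(H)}$. Then:
\begin{itemize}
\item $g(v)\in L(v)$ for every $v\in V(H)$;
\item $g$ is proper on $H$, since $E(H)\subseteq E(G)$;
\item for each $c\in\mathcal{L}$, no vertex outside $V(H)$ can receive $c$, because those vertices' lists consist only of new colors. Hence $g^{-1}(c)=f^{-1}(c)$, which has size $\lfloor\eta_L(c)/k\rfloor$ or $\lceil\eta_L(c)/k\rceil$.
\end{itemize}
Therefore $g$ is a proportional $L$-coloring of $H$. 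Since $L$ was an arbitrary $k$-assignment for $H$, $H$ is proportionally $k$-choosable.

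The main (mild) obstacle is ensuring that the vertices outside $H$ cannot disturb the class sizes of the old colors. Giving them fresh, pairwise disjoint colors resolves this, since each fresh color has multiplicity $1$ and imposes no constraint.
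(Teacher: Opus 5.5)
Your proof is correct, and it is essentially the standard argument behind this result (the present paper only cites it from \cite{KM19}): you give each vertex of $V(G)\setminus V(H)$ its own list of $k$ fresh colors, so that $\eta_{L'}(c)=\eta_L(c)$ for every $c$ in the palette of $L$, and then restrict a proportional $L'$-coloring of $G$ to $H$. Your opening paragraph about first reducing to vertex deletion is unnecessary (and the expression $E(H)\cup\ldots$ there is garbled), since your restriction step already handles arbitrary subgraphs via $E(H)\subseteq E(G)$.
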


For any graph $G$, the \emph{proportional choice number} of $G$, denoted by $\chi_{pc}(G)$, is the smallest positive integer $k$ such that $G$ is proportionally $k$-choosable. By Proposition~\ref{pro: monoink}, a graph $G$ is proportionally $k$-choosable if and only if $k \geq \chi_{pc}(G)$.

Although a complete characterization of equitably $2$-choosable graphs remains elusive, a characterization of proportionally $2$-choosable graphs is established in~\cite{KM20}. Recall that a \emph{linear forest} is a forest whose components are paths.

\begin{thm}[\cite{KM20}] \label{thm: full2character}
A graph $G$ is proportionally $2$-choosable if and only if $G$ is a linear forest in which the largest component has at most five vertices and every other component has at most two vertices.
\end{thm}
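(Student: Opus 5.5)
We prove the two directions separately. Both lean on Proposition~\ref{lem: monotone}, which says that proportional $2$-choosability passes to subgraphs. We also use one counting fact throughout. If $L$ is a $2$-assignment and $\eta(c)$ is even, then a proportional $L$-coloring uses $c$ exactly $\eta(c)/2$ times. If $\eta(c)$ is odd, it uses $c$ either $\lfloor \eta(c)/2\rfloor$ or $\lceil \eta(c)/2\rceil$ times.

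\emph{Necessity.} Suppose $G$ is not of the stated form. Then $G$ contains one of the following as a subgraph:
\begin{enumerate}[(i)]
\item a cycle;
\item $K_{1,3}$;
\item $P_6$;
\item $P_3+P_3$, which arises when two components each have at least three vertices.
\end{enumerate}
By Proposition~\ref{lem: monotone}, it suffices to give, for each graph $H$ in this list, a $2$-assignment $L$ with no proportional $L$-coloring.

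Odd cycles are not even $2$-colorable. For $K_{1,3}$ we take the constant list $\{1,2\}$. Then each color must be used exactly twice, but every proper $2$-coloring of $K_{1,3}$ has classes of sizes $1$ and $3$.

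For even cycles, $P_6$ and $P_3+P_3$, the constant assignment does not work, because these graphs have balanced proper $2$-colorings. Here I would build assignments mixing colors of multiplicity $2$, each forced to be used exactly once, with colors of larger even multiplicity that are forced to exact counts. I would arrange the colors so that the local forcing on one end or one component propagates along the path or cycle into a contradiction. Note that the "edge-color" assignment on a cycle, where $v_i$ and $v_{i+1}$ share color $c_i$, is colorable, so the construction must be less symmetric. I expect to use a color shared by three vertices to create parity tension. For the cycle $C_{2m}$, I expect to give a uniform family, for example the constant list on most vertices with a perturbation near one vertex, and to verify it by tracking the alternating coloring forced along the cycle. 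Finding these assignments, especially a uniform one for all even cycles, is the main obstacle, and I would first settle small cases by hand to guess the pattern.

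\emph{Sufficiency.} By Proposition~\ref{lem: monotone}, it suffices to show that $P_5+tK_2$ is proportionally $2$-choosable for every $t\ge 0$. We argue by induction on $t$.

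For the base case $P_5$, we do a case analysis on how many distinct colors appear in the five lists and on their multiplicities. The goal is to exhibit a proper coloring meeting the exact counts for the even-multiplicity colors.

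For the inductive step, fix an edge component $uv$ and let $L(u)$ and $L(v)$ be its lists. We choose a coloring of $uv$ and then remove it, obtaining a reduced assignment on $G-\{u,v\}$. Under this reduced assignment, the multiplicity of each color drops by the number of lists among $L(u),L(v)$ containing it. We need the proportionality windows for the reduced instance to translate back to valid windows in $G$. This works if each used color $c$ lies in both $L(u)$ and $L(v)$, or if $c$ has odd multiplicity so that there is slack; we choose the coloring of $uv$ accordingly. When no such choice exists, we instead pair $uv$ with another component and remove both together. The delicate part is the bookkeeping of parities of $\eta$ under deletion.
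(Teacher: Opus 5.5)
The paper only quotes this theorem from \cite{KM20} and gives no proof of it, so I judge your outline on its own terms. Your reduction of necessity to a cycle, $K_{1,3}$, $P_6$, or $P_3+P_3$ is correct, and so are your arguments for odd cycles and $K_{1,3}$. For the remaining cases, however, you describe a search rather than an assignment, so this direction is not yet proved. Two observations finish it.

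First, $C_{2m}$ contains $P_6$ when $m\ge 3$. So $C_4$ is the only even cycle that needs its own assignment, and no uniform family is required.

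Second, the assignments you are after are exactly those of Lemma~\ref{lem: obstruct} with $k=2$. Split $V(H)$ into pairs and give both vertices of the $i$th pair the list $\{1,c_i\}$, where $c_i$ occurs nowhere else. Each $c_i$ has multiplicity $2$ and is therefore used exactly once, so color $1$ lands on exactly one vertex of every pair. This is impossible as soon as no independent set meets every pair exactly once. Suitable pairings are:
$\{v_1,v_3\},\{v_2,v_4\}$ for $C_4=v_1v_2v_3v_4v_1$;
$\{v_1,v_3\},\{v_2,v_5\},\{v_4,v_6\}$ for $P_6=v_1\cdots v_6$ (taking $v_2$ collides with the first pair, taking $v_5$ with the third);
and $\{x_1,x_3\},\{x_2,y_2\},\{y_1,y_3\}$ for $x_1x_2x_3+y_1y_2y_3$.
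No color of multiplicity three is needed.

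Sufficiency has a genuine gap. Your reduction to $P_5+tK_2$ is right. But the base case $P_5$, which is where the bound of five vertices must be used, is only announced, and the inductive step fails as stated.

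Coloring a component $uv$ and applying the hypothesis to $L$ restricted to $H=G-\{u,v\}$ is sound only under two conditions: every color of $L(u)\cap L(v)$ is used on $uv$, and every color of $L(u)\triangle L(v)$, used or not, has odd $\eta_L$. Your criterion omits the unused colors. Suppose $c$ lies in exactly one of $L(u),L(v)$ and $\eta_L(c)$ is even. Then $\eta(c)$ is odd in the restricted instance, and the coloring of $H$ supplied by the hypothesis may round it either way. Consequently neither using nor avoiding $c$ on $uv$ is safe. For example, take $P_5+K_2$ with $L(u)=\{1,2\}$, $L(v)=\{3,4\}$, and each of $1,2,3,4$ in exactly one list on the path. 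No coloring of $uv$ is guaranteed to extend, and there is no second $K_2$ to group $uv$ with. Your fallback therefore sends you back to $P_5$ with nothing to induct on.

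What is needed is a simultaneous argument rather than deletion. The proof of Theorem~\ref{thm: match} with $t=1$ provides one for matching edges. Pair the occurrences of each color so that $(c,u)$ and $(c,v)$ are paired whenever $uv$ is a $K_2$-component with $c\in L(u)\cap L(v)$. Every integral solution of the resulting system is a coloring with proportional counts in which paired vertices never share their paired color. These solutions arise by independently switching the path and cycle components of the auxiliary bipartite graph $B$. This absorbs all $K_2$ components with no parity bookkeeping.

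What your outline still lacks is the argument on the $P_5$ itself. There a color may lie in three consecutive lists, so not every conflicting pair can be paired. You would need to show that the pairings and switches can still be chosen so that the path is also properly colored.
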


In particular, Theorem~\ref{thm: full2character} implies that $\chi_{pc}(P_n)=2$ for $2\leq n\leq 5$, while $\chi_{pc}(P_n)\geq 3$ for every $n\geq 6$. Despite this characterization, relatively little is known about the proportional choice numbers of longer paths. This leads naturally to the following two questions which provide one of the motivations for the present work.

\begin{ques}[\cite{KM20}] \label{ques: proportionalpaths1}
For each $n \geq 6$, what is the value of $\chi_{pc}(P_n)$?
\end{ques}

\begin{ques}[\cite{KM20}] \label{ques: proportionalpaths2}
Does there exist a constant $C$ such that $\chi_{pc}(P_n) \leq C$ for every $n \in \N$?
\end{ques}

It is worth mentioning that related results concerning the proportional choosability of paths when the palette of colors is restricted were obtained in~\cite{MP22}.  Another motivation for the present work is a general upper bound on proportional choice numbers. The following general upper bound is known, and the subsequent question was posed in the first paper on proportional choosability.

\begin{thm}[\cite{KM19}] \label{thm: smallorder}
Any graph $G$ with $\Delta(G) \geq 1$ satisfies
$$\chi_{pc}(G) \leq \Delta(G) + \left \lceil \frac{|V(G)|}{2} \right \rceil.$$
\end{thm}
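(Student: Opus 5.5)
Let $k \geq \Delta(G) + \lceil |V(G)|/2 \rceil$, let $L$ be a $k$-assignment for $G$, and write $n = |V(G)|$. Since $k \geq n/2 + 1$ (using $\Delta(G)\geq 1$), every color $c$ satisfies $\eta(c) \leq n < 2k$. So $\lfloor \eta(c)/k \rfloor \in \{0,1\}$ and $\lceil \eta(c)/k \rceil \in \{1,2\}$. This turns the proportional condition into a purely combinatorial one:
\begin{itemize}
\item a color with $\eta(c) < k$ may be used at most once;
\item a color with $\eta(c) = k$ must be used exactly once;
\item a color with $k < \eta(c) < 2k$ must be used once or twice.
\end{itemize}
Call a color \emph{heavy} if $\eta(c) \geq k$. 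Counting incidences gives $k\cdot|\{\text{heavy colors}\}| \leq \sum_c \eta(c) = kn$, so there are at most $n$ heavy colors. Sharper, if $h_2$ colors have $\eta(c) > k$, then the heavy colors together need at least $h$ vertices and can absorb at most $h + h_2$ vertices, where $h$ is the number of heavy colors.

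The plan is to build the coloring in two stages. First, choose an injective assignment of each heavy color $c$ to a distinct vertex $v_c$ with $c \in L(v_c)$, such that the chosen vertices form an independent-enough structure: the sets of vertices receiving heavy colors must be properly colored. I would get this from Hall's theorem on the bipartite graph between heavy colors and vertices. Each heavy color has at least $k \geq \Delta(G) + \lceil n/2 \rceil$ neighbors, which is far more than needed, so a greedy choice also works. Process the heavy colors one at a time and pick a vertex $v_c \in L^{-1}(c)$ that is not yet colored and has no neighbor already colored $c$. At most $n-1$ vertices are otherwise unavailable... but this count is too crude. Instead, because each color is used once, adjacency conflicts cannot arise from the color $c$ itself. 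Only already-colored vertices are forbidden, and since $h \leq n$ and $\eta(c)\ge k$, I will need to control the case $h$ close to $n$ via the incidence count, which forces almost every list to consist entirely of heavy colors.

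Second, color the remaining vertices. Each uncolored vertex $v$ must get either a light color not yet used, or a second use of a color with $\eta(c) > k$ that is not on a neighbor. A color in $L(v)$ is blocked for $v$ if it was already used on a neighbor (at most $\Delta(G)$ colors) or if it was already used elsewhere and cannot be reused. Light colors are each used at most once, and those uses are on distinct vertices, so fewer than $\lceil n/2\rceil$ of them are blocked, provided the uncolored vertices at the end number at most about $n/2$. This is where the bound $k \geq \Delta + \lceil n/2\rceil$ enters: at any time, the blocked colors in $L(v)$ number at most $\Delta(G)$ plus the non-neighbor colored vertices. To make this at most $k-1$, I will order the greedy coloring so that at most $\lceil n/2 \rceil - 1$ non-neighbors of $v$ are colored with non-reusable colors. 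I would try pairing vertices (a matching in the complement or arbitrary pairing) and allowing a reusable heavy color to cover two vertices.

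I expect this counting to be the main obstacle, namely showing that the greedy steps never get stuck while heavy colors still must each appear at least once. The alternative I would pursue if the direct greedy fails is a system-of-distinct-representatives formulation. Duplicate each color with $\eta(c) > k$ into two copies, one mandatory and one optional. Then find a matching saturating all mandatory copies and all vertices, using Hall's condition with the degree bound $|L(v)| = k$. Finally, fix conflicts along edges by local swaps, where the slack of $\Delta(G)$ colors per vertex absorbs the adjacency constraints.
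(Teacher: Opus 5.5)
Your reduction to the three cases (at most one use, exactly one use, one or two uses) is correct. However, the proposal does not prove the theorem. The main two-stage plan fails, and the fallback leaves both of its essential steps unproved.

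Stage~1 cannot be chosen independently of the remaining vertices. A color with $\eta(c)=k$ must be used exactly once, so a vertex whose list consists only of such colors is stuck as soon as Stage~1 skips it. Concretely, let $G$ have edges $v_1v_2,v_3v_4,v_5v_6$, so $n=6$, $\Delta(G)=1$, $k=4$. Take $L(v_1)=L(v_6)=\{a,b,c,d\}$, $L(v_2)=\{a,b,x_1,x_2\}$, $L(v_3)=\{a,b,x_3,x_4\}$, $L(v_4)=\{c,d,x_5,x_6\}$, $L(v_5)=\{c,d,x_7,x_8\}$. Then $\eta(a)=\eta(b)=\eta(c)=\eta(d)=k$. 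Your greedy may place $a,b,c,d$ on $v_2,\ldots,v_5$, leaving $v_1$ and $v_6$ with no admissible color. Yet a proportional $L$-coloring exists: put $a,b,x_3,x_5,d,c$ on $v_1,\ldots,v_6$. No ordering of Stage~2 can repair this, so your intended bound on colored non-neighbors with non-reusable colors has no mechanism behind it.

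Your fallback is the right framework, but both of its steps are only asserted.

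\emph{Step 1: the assignment.} You want a possibly improper $g$ with $g(v)\in L(v)$ and $\lfloor\eta(c)/k\rfloor\le|g^{-1}(c)|\le\lceil\eta(c)/k\rceil$ for all $c$. This needs two Hall verifications, which you did not state:
\begin{itemize}
\item $\sum_{c\in L(S)}\lceil\eta(c)/k\rceil\ge|S|$ for every $S\subseteq V(G)$. This holds because the lists of $S$ contain $k|S|$ incidences and color $c$ accounts for at most $\eta(c)\le k\lceil\eta(c)/k\rceil$ of them.
\item $|N(T)|\ge|T|$ for every set $T$ of colors with $\eta\ge k$. This follows by double counting, since each list meets $T$ in at most $k$ colors.
\end{itemize}
You then need the Mendelsohn--Dulmage theorem, or augmenting paths started from a matching covering the mandatory copies, to satisfy both at once. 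Hall's theorem alone does not give simultaneous saturation.

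\emph{Step 2: conflict repair.} This is the crux, and ``the slack of $\Delta(G)$ colors'' does not handle it. Take $g$ with the fewest monochromatic edges, and suppose $g(u)=g(v)=c$ for an edge $uv$. Then $\eta(c)>k$ and $g^{-1}(c)=\{u,v\}$. There are two cases:
\begin{itemize}
\item Some $d\in L(v)\setminus\{c\}$ is absent from $N(v)$ and below capacity. Then you recolor $v$ directly.
\item Otherwise, at least $(k-1)-(\Delta(G)-1)=k-\Delta(G)$ colors of $L(v)\setminus\{c\}$ sit at full capacity on non-neighbors of $v$. Then no recoloring of $v$ alone works.
\end{itemize}
The missing idea for the second case is a swap. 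Pick $w\notin N(u)\cup\{u\}$ with $c\in L(w)$ whose color $d=g(w)$ is one of those saturated colors, and set $g(v)=d$, $g(w)=c$. The new classes $\{u,w\}$ and $(g^{-1}(d)\setminus\{w\})\cup\{v\}$ are independent and keep their sizes, so the number of monochromatic edges strictly drops.

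Such a $w$ exists by pigeonhole. The vertices carrying saturated colors number at least $k-\Delta(G)$. The vertices outside $N(u)\cup\{u\}$ whose lists contain $c$ number at least $\eta(c)-\Delta(G)-1\ge k-\Delta(G)$. Both sets lie in $V(G)\setminus\{u,v\}$, and $2(k-\Delta(G))\ge2\lceil n/2\rceil>n-2$. This counting is exactly where the hypothesis $k\ge\Delta(G)+\lceil n/2\rceil$ is used, and your proposal never identifies it.
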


\begin{ques}[\cite{KM19}] \label{ques: fullHS}
For any graph $G$, does it follow that $\chi_{pc}(G) \leq \Delta(G)+1$?
\end{ques}

\subsection{Outline and Summary of Results}
In this paper we answer Questions~\ref{ques: proportionalpaths2} and~\ref{ques: fullHS}.  We now present an outline.  In Section~\ref{generalreduction}, we begin by using ideas from matching theory and linear programming to prove the following result.

\begin{thm} \label{thm: match}
Suppose $G$ is a graph and $M$ is a matching in $G$. Then, $\chi_{pc}(G) \leq 2\chi_{pc}(G-M)$.
\end{thm}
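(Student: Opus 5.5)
Let $k=\chi_{pc}(G-M)$ and let $L$ be an arbitrary $2k$-assignment for $G$. The plan is to pass to a $k$-assignment $L'$ of $G-M$ with three properties. First, $L'(v)\subseteq L(v)$ and $|L'(v)|=k$ for every $v$. Second, for each edge $uv\in M$ we have $L'(u)\cap L'(v)=\emptyset$. Third, every color $c\in\mathcal{L}$ satisfies $\eta_{L'}(c)\in\{\lfloor \eta_L(c)/2\rfloor,\lceil \eta_L(c)/2\rceil\}$.

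Given such an $L'$, I take a proportional $L'$-coloring $f$ of $G-M$, which exists by the choice of $k$. The coloring $f$ is proper on $G$: edges of $G-M$ are handled by $f$ itself, and an edge $uv \in M$ is safe because $f(u)\in L'(u)$, $f(v)\in L'(v)$, and these lists are disjoint. For the class sizes, write $\eta=\eta_L(c)$ and $\eta'=\eta_{L'}(c)$, and note that $\eta/2$ lies in the interval $I=[k\lfloor \eta/2k\rfloor,\,k\lceil \eta/2k\rceil]$. Both endpoints of $I$ are integers, so $\lfloor\eta/2\rfloor$ and $\lceil\eta/2\rceil$ also lie in $I$, and hence $\eta'\in I$. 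Every integer $x$ with $\eta'/k \le x$ or $x\le \eta'/k$ in the relevant rounding then satisfies $\lfloor \eta'/k\rfloor,\lceil \eta'/k\rceil\in\{\lfloor \eta/2k\rfloor,\lceil \eta/2k\rceil\}$. So $|f^{-1}(c)|$ has an allowed size for the $2k$-assignment $L$, and $f$ is a proportional $L$-coloring of $G$. It follows that $G$ is proportionally $2k$-choosable.

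The main step, and the expected obstacle, is constructing $L'$. I will do this with a flow network and integrality.

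The network has a source $s$, a node for each color $c$, and a sink $t$. For each vertex $v$ there is a node $v$ with an arc $v\to t$ of lower and upper capacity $k$. For a color $c$ and a vertex $v$ with $c\in L(v)$, I add an arc $c\to v$ of capacity $1$, except in one situation. If $uv\in M$ and $c\in L(u)\cap L(v)$, I instead route $c$ through an auxiliary node $w_{c,uv}$: one arc $c\to w_{c,uv}$ of capacity $1$, then arcs from $w_{c,uv}$ to $u$ and to $v$. Finally, each arc $s\to c$ gets lower bound $\lfloor \eta(c)/2\rfloor$ and upper bound $\lceil \eta(c)/2\rceil$.

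A feasible fractional flow puts $1/2$ on every color-to-vertex incidence. Concretely, a gadget node $w_{c,uv}$ receives $1$ and passes $1/2$ to each of $u$ and $v$, so each vertex receives exactly $|L(v)|/2=k$, and each color sends $\eta(c)/2$. Since all capacity bounds are integers, Hoffman's circulation theorem (equivalently, total unimodularity of network matrices) gives an integral feasible flow.

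I then define $L'(v)$ as the set of colors whose flow reaches $v$, either directly or through a gadget node. Each vertex receives flow $k$, and each color contributes at most one unit to $v$, through a single arc or a single gadget node. Hence $|L'(v)|=k$. Each gadget node carries at most one unit, so a color shared by a matched pair is sent to at most one of $u$ and $v$, which gives the disjointness condition. The flow out of color $c$ equals $\eta_{L'}(c)$, so it lies in $\{\lfloor \eta(c)/2\rfloor,\lceil \eta(c)/2\rceil\}$. The care needed is in checking that the gadget exactly encodes the pairwise-disjointness constraint, and that each color meets each vertex at most once, which holds because $M$ is a matching and so every vertex lies in at most one gadget pair.
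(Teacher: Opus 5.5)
Your proof is correct and is essentially the paper's argument. Both proofs round the all-$\tfrac12$ fractional solution to an integral one using total unimodularity, which yields a $k$-sub-assignment $L'$ with halved multiplicities and disjoint lists on the edges of $M$. The paper gets there by pairing all occurrences of each color (with matched occurrences forced into a common pair) and finding a degree-constrained subgraph of a bipartite graph; you use a circulation with gadget nodes only for matched pairs and impose the halving through the bounds on the $s\to c$ arcs.

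Your garbled sentence in the counting step should simply say that $\eta'\in I$ gives $\eta'/k\in[\lfloor \eta/(2k)\rfloor,\lceil \eta/(2k)\rceil]$. That interval has integer endpoints differing by at most one, so both roundings of $\eta'/k$ are allowed class sizes for $L$.
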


The \emph{chromatic index} of a graph $G$, denoted by $\chi'(G)$, is the smallest nonnegative integer $m$ such that $G$ has a proper $m$-edge coloring.  Theorem~\ref{thm: match} allows us to easily prove the following general upper bound on the proportional choice number of a graph from which a positive answer to Question~\ref{ques: proportionalpaths2} immediately follows.

\begin{thm} \label{thm: chromindex}
For any graph $G$, $\chi_{pc}(G) \leq 2^{\chi'(G)}$.
\end{thm}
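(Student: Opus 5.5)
The plan is to induct on $\chi'(G)$, peeling off one color class of an optimal edge coloring at a time with Theorem~\ref{thm: match}. Every color class of a proper edge coloring is a matching. Removing one class yields a graph whose chromatic index has dropped by exactly one.

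\emph{Base case.} If $\chi'(G)=0$, then $G$ is edgeless. Given any $1$-assignment $L$, the only possible coloring assigns each vertex the unique color in its list. This coloring is proper because there are no edges. Each color $c$ is then used on exactly $\eta(c)=\lfloor \eta(c)/1\rfloor$ vertices, so the coloring is proportional. Hence $\chi_{pc}(G)=1=2^0$. (A proportional choice number is always at least $1$, so the bound is tight here.)

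\emph{Inductive step.} Suppose $\chi'(G)=m\geq 1$ and the statement holds for all graphs with chromatic index $m-1$. Fix a proper $m$-edge coloring of $G$ and let $M$ be one of its color classes; $M$ is a matching in $G$. The remaining $m-1$ classes give a proper $(m-1)$-edge coloring of $G-M$, so $\chi'(G-M)\leq m-1$. We need the inductive hypothesis to apply whenever $\chi'(G-M)\le m-1$, not only when it equals $m-1$. To arrange this, phrase the induction as the claim $\chi_{pc}(H)\le 2^{j}$ for every graph $H$ with $\chi'(H)\le j$. This works because $2^{\chi'(H)}\le 2^{j}$ whenever $\chi'(H)\le j$, and it also handles the (impossible here, but harmless) case where removing $M$ drops the index by more than one. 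The inductive hypothesis then gives $\chi_{pc}(G-M)\leq 2^{m-1}$. Theorem~\ref{thm: match} yields
$$\chi_{pc}(G)\leq 2\chi_{pc}(G-M)\leq 2\cdot 2^{m-1}=2^{m}=2^{\chi'(G)}.$$

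There is no real obstacle: all the difficulty lies in Theorem~\ref{thm: match}, which we assume. The only care needed is the base case and making sure $G-M$ keeps the same vertex set. It does, since removing edges does not remove vertices, and this matters because proportional colorings depend on the full vertex set through the multiplicities $\eta(c)$. As an immediate corollary, paths have $\chi'(P_n)\leq 2$, so $\chi_{pc}(P_n)\le 4$, which answers Question~\ref{ques: proportionalpaths2}.
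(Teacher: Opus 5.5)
Your proof is correct and follows the paper's argument: the paper also removes the color classes $M_1,\dots,M_m$ of an optimal edge coloring one at a time. It applies Theorem~\ref{thm: match} at each step and uses $\chi_{pc}=1$ for the final edgeless graph, which is the iterated form of your induction.
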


Since the chromatic index of any path is at most 2, Theorem~\ref{thm: chromindex} implies that $\chi_{pc}(P_n) \leq 4$ for every $n \in \N$.  The bound of 4 actually applies to any bipartite graph with maximum degree 2.  Importantly, Question~\ref{ques: proportionalpaths1} is still open.  In fact, we conjecture that our bound is never tight for paths.

\begin{conj} \label{conj: its3}
For any $n \geq 6$, $\chi_{pc}(P_n) = 3$.
\end{conj}

We end Section~\ref{generalreduction} by showing how Theorems~\ref{thm: full2character} and~\ref{thm: match} can be used together to show that the proportional choice number of any odd cycle is at most 4 even though the chromatic index of an odd cycle is 3.

In Section~\ref{tightness} we consider the tightness of the bound in Theorem~\ref{thm: chromindex}, and in the process, show that the answer to Question~\ref{ques: fullHS} is no.  In particular, we prove the following.

\begin{thm}\label{thm: Kab}
If $a,b \in \N$ satisfy $1 \leq a \leq b$ and $G=(a+b-1)K_{a,b}$, then $\chi_{pc}(G)=a+b = \Delta(G)+a$.
\end{thm}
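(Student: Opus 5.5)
For the upper bound, the plan is to show more generally that if every component of $G$ has at most $k$ vertices, then $G$ is proportionally $k$-choosable. Here $k=a+b$, which is exactly the order of $K_{a,b}$. The idea is to look only for colorings that are \emph{rainbow on each component}, meaning no color is used twice within a component. Such colorings are automatically proper, so only the class sizes need to be controlled. I would model this as a flow network. There is an arc from a source to each color $c\in\mathcal{L}$ with lower bound $\lfloor \eta(c)/k\rfloor$ and upper bound $\lceil \eta(c)/k\rceil$. For each component $H$, there is an arc of capacity $1$ from $c$ to a node $(c,H)$. From $(c,H)$ there is an arc of capacity $1$ to each $v\in V(H)$ with $c\in L(v)$. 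Finally, each vertex has an arc of capacity exactly $1$ to the sink.

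The fractional flow that sends $1/k$ from each vertex to each color in its list is feasible:
\begin{itemize}
\item color $c$ receives exactly $\eta(c)/k$, which lies between its lower and upper bounds;
\item the node $(c,H)$ carries at most $|V(H)|/k\le 1$.
\end{itemize}
By the integrality theorem for flows with lower and upper bounds, there is an integral feasible flow. An integral flow is precisely a component-rainbow proportional $L$-coloring, which gives $\chi_{pc}(G)\le a+b$. The identity $a+b=\Delta(G)+a$ is immediate because $\Delta(G)=b$.

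For the lower bound, I must exhibit an $(a+b-1)$-assignment $L$ for $G=(a+b-1)K_{a,b}$ with no proportional $L$-coloring. Write $k=a+b-1$, so $|V(G)|=k(k+1)$. The lever I plan to use is colors whose multiplicity is a multiple of $k$, since their class sizes are then forced exactly. In particular, a color of multiplicity exactly $k$ must be used exactly once.

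The first attempt will be as follows. Give every vertex one common color $s$, so $\eta(s)=k(k+1)$ and $s$ must appear on exactly $k+1=a+b$ vertices. Fill the remaining $k-1$ slots of each list with colors of multiplicity exactly $k$, each used exactly once, distributed across components so that two things happen. First, inside each copy of $K_{a,b}$, the lists restricted to the non-$s$ colors should force $s$ onto at most one vertex. For example, use Hall-type deficiency: make the $a$-side and $b$-side share their other colors, so that at most one vertex per component can avoid $s$ only if... Second, the global requirement that every multiplicity-$k$ color is used exactly once should interact with this. Since there are only $k=a+b-1$ components, $s$ could then cover at most $a+b-1<a+b$ vertices, which is a contradiction.

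To check such a construction, I would count vertices against colors. Each component has $a+b$ vertices, but its non-$s$ colors, together with the exact-once constraints, can cover only a limited number of them. Hall's condition on the bipartite graph of vertices versus once-used colors should then pin down how many vertices must receive $s$ in each component, and independence forces those vertices onto one side.

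The main obstacle is designing this lower-bound list assignment so that the forced counts genuinely conflict. If the single-common-color scheme fails to be tight, I would instead use a common set of $a$ colors for the $b$-sides and a common set of $b-1$ colors for the $a$-sides across all components, and redo the count. The aim is that pigeonhole over the $a+b-1$ components forces some component to use a color on both sides.
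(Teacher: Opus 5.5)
Your upper bound is correct and is a self-contained proof of Lemma~\ref{lem: fullcomponents}, which the paper only cites. The fractional flow is feasible because each node $(c,H)$ carries at most $|V(H)|/k\le 1$. An integral flow is then a proportional $L$-coloring that is rainbow on every component, hence proper.

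The gap is the lower bound, which is the real content of the theorem. You never specify a list assignment: the key sentence breaks off at ``only if\ldots''. More importantly, the mechanism you sketch cannot work as described. With one common color $s$ and all other colors of multiplicity exactly $k=a+b-1$, each used exactly once, the number of $s$-vertices is automatically $k(k+1)-(k^2-1)=k+1$. Your plan needs the lists to cap each copy of $K_{a,b}$ at one $s$-vertex, or, in the fallback, to force some color onto both sides of a copy. Neither can be derived from constraints inside a single copy. Every vertex has $a+b-2\ge a$ non-$s$ colors (note $b\ge 2$ once $a+b\ge 3$). So within any one copy, the $a$-side can receive distinct non-$s$ colors while the entire $b$-side takes $s$. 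This respects properness and the at-most-once condition, and it puts $s$ on all $b\ge 2$ vertices of one side. Hence the obstruction must come from exactly-once conditions that link vertices of \emph{different} copies, combined with independence of $f^{-1}(s)$. Pigeonhole over copies does not supply this by itself. For instance, if every vertex of $3C_4$ gets the list $\{1,2,3\}$, each color can occupy one side in each of two different copies. That is a proportional coloring in which no copy uses a color on both sides.

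The missing idea is to group the once-used colors into blocks. Partition $V(G)$ into $k+1$ blocks $Q_1,\dots,Q_{k+1}$ of size $k$, and give each vertex of $Q_i$ the list $\{s\}\cup P_i$, where the $P_i$ are pairwise disjoint $(k-1)$-sets. Every color of $P_i$ occurs only in $Q_i$ and must be used exactly once, so exactly one vertex of each $Q_i$ receives $s$. Then $f^{-1}(s)$ would be an independent transversal of the partition; this is Lemma~\ref{lem: obstruct}. The lower bound therefore reduces to building a partition of $V((a+b-1)K_{a,b})$ into blocks of size $a+b-1$ with no independent transversal. That construction is where the work lies, and your proposal does not address it.

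The paper builds it inductively, following Szab\'o and Tardos. It starts from the two sides of one copy. It then repeatedly takes an undersized block $Z$, splits it as $Z_1\cup Z_2$, and attaches $Z_1$ to one side and $Z_2$ to the other side of a fresh copy. Since these two sides are completely joined, an independent transversal of the new partition must choose a vertex of $Z$ in one of the two new blocks. Keeping one such vertex and discarding the fresh copy yields an independent transversal of the old partition. The construction is necessarily delicate: for $a=b=d$ the blocks have size $2\Delta(G)-1$, just below the threshold $2\Delta(G)$ at which Haxell's theorem guarantees an independent transversal.
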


The bound in Theorem~\ref{thm: chromindex} is clearly tight when $\chi'(G)=1$ by Theorem~\ref{thm: full2character} since any graph with chromatic index 1 consists of one or more copies of $K_2$ possibly together with isolated vertices. Theorem~\ref{thm: Kab} gives an interesting example of tightness.  In particular, when $G = 3 K_{2,2} = 3 C_4$ its chromatic index is 2, and its proportional choice number is 4.

Notice that Theorem~\ref{thm: Kab} also tells us that when $G = (2d-1)K_{d,d}$ for some $d \in \N$, $\chi_{pc}(G) = 2d = 2 \Delta(G)$ which shows that the answer to Question~\ref{ques: fullHS} is no in a rather strong sense.  With Theorem~\ref{thm: Kab} in mind, we conjecture another general upper bound on the proportional choice number of a graph.  Recall that the \emph{coloring number} of a graph $G$, denoted by $\col(G)$, is the smallest $d \in \N$ such that there is an ordering of the vertices in $V(G)$ such that each vertex in the ordering has at most $d-1$ neighbors preceding it in the ordering.  We conjecture the following.

\begin{conj} \label{conj: upper}
For every graph $G$ with $\Delta(G) \geq 1$, $\chi_{pc}(G) \leq \Delta(G) + \col(G) - 1$.
\end{conj}  

Note that since $\col(K_{a,b}) = a+1$ when $a \leq b$, Theorem~\ref{thm: Kab} would demonstrate the tightness of Conjecture~\ref{conj: upper} if true.  Additionally, if Conjecture~\ref{conj: upper} is true it would immediately imply Conjecture~\ref{conj: its3} since $\col(P_n)=2$ whenever $n \geq 2$.

\section{A Matching Reduction and Consequences}\label{generalreduction}

We begin this section by proving Theorem~\ref{thm: match}.

\begin{proof}
The result is trivially true when $M$ is empty. So, we may suppose $M \neq \emptyset$. Suppose that $\chi_{pc}(G-M) = t$, and $L$ is an arbitrary $2t$-assignment of $G$. We must show that $G$ is proportionally $L$-colorable. Throughout the proof let $\mathcal{L} = \bigcup_{v \in V(G)} L(v)$.

Now, for each $c \in \mathcal{L}$, let $O(c) = \{(c,v) \in \{c\} \times V(G) : c \in L(v)\}$. Also, let $\mathcal{B}(c)$ be an arbitrarily chosen partition of $O(c)$ into blocks of size 2 and at most one block of size 1 with the property that $\{(c,u),(c,v)\} \in \mathcal{B}(c)$ whenever $uv \in M$ and $(c,u),(c,v) \in O(c)$. Since $M$ is a matching, such a partition exists. Also, $\mathcal{B}(c)$ will contain a block of size 1 if and only if $\eta_L(c)$ is odd.

Next, let $\mathcal{B} = \bigcup_{c \in \mathcal{L}} \mathcal{B}(c)$. Let $B$ be the bipartite graph with bipartition $V(G)$, $\mathcal{B}$, and edges such that each $v \in V(G)$ is adjacent to a set in $\mathcal{B}$ if and only if the set contains an ordered pair with $v$ as the second coordinate. Note that $d_B(v) = 2t$ for each $v \in V(G)$, and for each $X \in \mathcal{B}$, $d_B(X) = |X|$. Now, we claim there is a spanning subgraph $S$ of $B$ such that each $v \in V(G)$ has degree $t$ in $S$, each $X \in \mathcal{B}$ with $|X|=2$ has degree 1 in $S$, and all other elements of $\mathcal{B}$ have degree at most 1 in $S$.

To see such a subgraph exists, consider the following feasibility LP. Associate with each edge $e \in E(B)$ a variable $x_e$. The LP consists of the following constraints:
\begin{align*}
\sum_{e \ni v} x_e &= t \quad (v \in V(G)), & \sum_{e \ni X} x_e &= 1 \quad (X \in \mathcal{B}, |X|=2),\\
\sum_{e \ni X} x_e &\leq 1 \quad (X \in \mathcal{B}, |X|=1), \text{ and } & 0 \leq x_e &\leq 1 \quad (e \in E(B)).
\end{align*}
Observe that setting $x_e=1/2$ for each $e \in E(B)$ gives a feasible solution.  Moreover, since $B$ is bipartite, its vertex-edge incidence matrix is totally unimodular. Replacing equality constraints by pairs of inequalities and adjoining the bounds $0 \leq x_e \leq 1$ preserves total unimodularity. Consequently, since all the right-hand sides of the constraints are integers, the feasible polytope is integral by a theorem of Hoffman and Kruskal (see e.g.,~\cite[Theorem 19.1]{S86}). Thus, since the polytope is nonempty and bounded, there is a feasible solution in which $x_e \in \{0,1\}$ for each $e \in E(B)$.  The edges corresponding to variables with value 1 form the edge set of the desired spanning subgraph $S$ of $B$.

Now, for each $v \in V(G)$, let \[L'(v) = \{c \in \mathcal{L} : \text{$v$ is adjacent in $S$ to a set containing an element with first coordinate $c$} \}.\] Notice $L'(v) \subseteq L(v)$ for each $v \in V(G)$ and $L'$ is a $t$-assignment for $G$.

Suppose that for $c \in \mathcal{L}$, $\eta_L(c)=2tq+r$ where $q,r \in \Z$ and $0 \leq r \leq 2t-1$. By the degree properties of the vertices in $S$, we have
\[
\eta_{L'}(c) \in \left\{tq+\left\lfloor \frac{r}{2}\right\rfloor,tq+\left\lceil \frac{r}{2}\right\rceil\right\}.
\]
Notice that when $r=0$, $\eta_{L'}(c)=tq$, and when $r>0$, $tq \leq \eta_{L'}(c) \leq t(q+1)$. Consequently, a proportional $L'$-coloring uses $c$ exactly $q$ times when $r=0$, and either $q$ or $q+1$ times when $r>0$. Thus, a proportional $L'$-coloring of $G$ is also a proportional $L$-coloring of $G$.

Since $L'$ is also a $t$-assignment for $G-M$, there is a proportional $L'$-coloring of $G-M$ which we will call $f$. We claim that $f$ is also a proportional $L'$-coloring of $G$. Since $f$ uses colors an appropriate number of times for a proportional coloring, we need only show that $f$ is a proper $L'$-coloring of $G$. So, for the sake of contradiction suppose that $uv \in E(G)$ and $f(u) = z = f(v)$. Since $f$ is a proper $L'$-coloring of $G-M$, $uv \in M$. However, this would mean that $\{(z,u),(z,v)\} \in \mathcal{B}$, and since this set has degree 1 in $S$, $z \notin L'(u) \cap L'(v)$ which is a contradiction.

So, $f$ is indeed a proportional $L'$-coloring of $G$ and consequently it is also a proportional $L$-coloring of $G$.
\end{proof}

Now, recall that the \emph{chromatic index} of a graph $G$, denoted by $\chi'(G)$, is the smallest nonnegative integer $m$ such that $G$ has a proper $m$-edge coloring; that is, a coloring that colors the edges of $G$ with colors from $[m]$ such that any two edges sharing an endpoint receive different colors.  Note that when $G$ is edgeless, the chromatic index of $G$ is zero.  By a famous result of Vizing~\cite{V64}, when $G$ is simple, $\chi'(G) \in \{\Delta(G),\Delta(G)+1\}$, and by a famous result of K\"onig~\cite{K16}, $\chi'(G)=\Delta(G)$ when $G$ is bipartite.
  We are now ready to prove Theorem~\ref{thm: chromindex} which we restate.

\begin{customthm}{\ref{thm: chromindex}}
For any graph $G$, $\chi_{pc}(G) \leq 2^{\chi'(G)}$.
\end{customthm}

\begin{proof}
The result is clear when $G$ is edgeless.  So, we may suppose $E(G) \neq \emptyset$.  Let $m = \chi'(G)$, and suppose $f:E(G) \rightarrow [m]$ is a proper $m$-edge coloring of $G$.  For each $i \in [m]$, let $M_i = f^{-1}(i)$.  Note that $\{M_1, \dots, M_m \}$ is a partition of $E(G)$ and since $f$ is proper, each element of this set is a matching in $G$.  Let $G_0 = G$, and for each $i \in [m]$, let $G_i = G - \bigcup_{j=1}^i M_j$.  Then, Theorem~\ref{thm: match} tells us for each $i \in [m]$, $\chi_{pc}(G_{i-1}) \leq 2\chi_{pc}(G_{i})$.  Since $\chi_{pc}(G_m) = 1$, it follows that $\chi_{pc}(G_i) \leq 2^{m-i}$ for each $i \in (\{0\} \cup [m])$.  Consequently, $\chi_{pc}(G) = \chi_{pc}(G_0) \leq 2^m$. 
\end{proof}

By Vizing and K\"onig's aforementioned results, the following corollary is immediate.

\begin{cor} \label{cor: answers}
The following statements hold.\\
(i)  If $G$ is a simple graph, $\chi_{pc}(G) \leq 2^{\Delta(G)+1}$. \\
(ii) If $G$ is a bipartite graph, then $\chi_{pc}(G) \leq 2^{\Delta(G)}$.  Consequently, $\chi_{pc}(P_n) \leq 4$ and $\chi_{pc}(C_{2n+2}) \leq 4$ for each $n \in \N$.
\end{cor}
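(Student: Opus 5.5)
The plan is to read off both parts of Corollary~\ref{cor: answers} from Theorem~\ref{thm: chromindex} together with the stated facts about chromatic index, and then specialize to paths and even cycles.

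For (i), Vizing's theorem gives $\chi'(G) \le \Delta(G)+1$ for every simple graph $G$. Since $k \mapsto 2^k$ is increasing, Theorem~\ref{thm: chromindex} then gives
\[
\chi_{pc}(G) \le 2^{\chi'(G)} \le 2^{\Delta(G)+1}.
\]
This covers the edgeless case as well: there $\chi'(G)=0$, and $\chi_{pc}(G)=1 \le 2^{1}$.

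For (ii), K\"onig's theorem gives $\chi'(G)=\Delta(G)$ when $G$ is bipartite. Substituting this into Theorem~\ref{thm: chromindex} yields $\chi_{pc}(G) \le 2^{\Delta(G)}$.

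For the consequences, note that $P_n$ is bipartite with $\Delta(P_n) \le 2$. Likewise, $C_{2n+2}$ is an even cycle, so it is bipartite with maximum degree exactly $2$. Applying the bipartite bound gives $\chi_{pc} \le 2^{2}=4$ for both graphs. The only point needing a moment's care is $P_1$, which has $\Delta = 0$. There the bound reads $2^0=1$, and this holds trivially: a single vertex is proportionally $1$-choosable, since its unique color has multiplicity $1$ and is used once. In every case we therefore get $\chi_{pc}(P_n) \le 4$.

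I do not expect a real obstacle here. The only care needed is in quoting Vizing's and K\"onig's theorems in the right direction and in handling the degenerate cases with $\Delta \le 1$, which are still consistent with the monotone bound.
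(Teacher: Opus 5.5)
Your proposal is correct and follows the paper's route: the paper calls the corollary immediate from Theorem~\ref{thm: chromindex}, using Vizing's bound $\chi'(G)\le\Delta(G)+1$ and K\"onig's equality $\chi'(G)=\Delta(G)$ for bipartite graphs, and then specializing to paths and even cycles. Your separate treatment of $P_1$ is harmless but unnecessary, since $2^{\Delta(P_n)}\le 4$ whenever $\Delta(P_n)\le 2$, and the edgeless case is already covered by Theorem~\ref{thm: chromindex}.
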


We also easily get the following for odd cycles.

\begin{cor} \label{cor: oddcycle}
For any $n \in \N$, $\chi_{pc}(C_{2n+1}) \leq 4$.
\end{cor}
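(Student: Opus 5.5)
The plan is to apply Theorem~\ref{thm: match} once, with a single edge playing the role of the matching, and then use the characterization in Theorem~\ref{thm: full2character} for what is left. Fix $n \in \N$ and an edge $e = uv$ of $C_{2n+1}$, and take $M = \{e\}$. Since a single edge is trivially a matching, Theorem~\ref{thm: match} gives
\[
\chi_{pc}(C_{2n+1}) \leq 2\chi_{pc}(C_{2n+1} - e) = 2\chi_{pc}(P_{2n+1}).
\]
This bound alone is not enough, because $\chi_{pc}(P_{2n+1})$ can exceed $2$ once $2n+1 \geq 6$.

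The better choice is a larger matching whose removal leaves only small paths. I would take $M$ to be every other edge of the cycle, so that $C_{2n+1}-M$ is a linear forest with all components of at most two vertices, except for one component on three vertices, which is forced by the odd length.

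Concretely, label the vertices $v_1, \dots, v_{2n+1}$ in cyclic order and let
\[
M = \{v_2v_3,\, v_4v_5,\, \dots,\, v_{2n}v_{2n+1}\}.
\]
This is a matching of size $n$. The removed edges are $v_{2i}v_{2i+1}$ for $i \in [n]$, while the remaining edges are $v_{2i-1}v_{2i}$ for $i \in [n]$ together with $v_{2n+1}v_1$. Hence $C_{2n+1} - M$ consists of the path $v_{2n+1}v_1v_2$ on three vertices and the paths $v_{2i-1}v_{2i}$ for $2 \leq i \leq n$, each on two vertices.

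This graph is a linear forest whose largest component has three vertices, which is at most five, and every other component has at most two vertices. So Theorem~\ref{thm: full2character} gives $\chi_{pc}(C_{2n+1} - M) \leq 2$. Theorem~\ref{thm: match} then yields $\chi_{pc}(C_{2n+1}) \leq 4$. For $n=1$ the forest is just $P_3$, and the same argument applies.

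The only step that needs care is choosing $M$ correctly and checking the component sizes of $C_{2n+1}-M$, in particular that exactly one component has more than two vertices. Everything else is a direct citation of the two earlier results.
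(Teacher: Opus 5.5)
Your proof is correct and is essentially the paper's argument. The paper removes the matching $\{v_{2i-1}v_{2i} : i \in [n]\}$ rather than your $\{v_{2i}v_{2i+1} : i \in [n]\}$, which is only a relabeling, leaving a $P_3$ plus $n-1$ copies of $K_2$, and then applies Theorem~\ref{thm: full2character} and Theorem~\ref{thm: match} exactly as you do (your opening single-edge detour is unnecessary but harmless).
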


\begin{proof}
Suppose $G = C_{2n+1}$ and the vertices of $G$ in cyclic order are $v_1, v_2, \ldots, v_{2n+1}$.  Note that $M = \{v_{2i-1}v_{2i} : i \in [n] \}$ is a matching in $G$.  Additionally, $G - M$ consists of a $P_3$ and $(n-1)$ copies of $K_2$.  So, Theorem~\ref{thm: full2character} implies that $\chi_{pc}(G-M)=2$, and the result follows by Theorem~\ref{thm: match}. 
\end{proof}

\section{Tightness and Answering Question~\ref{ques: fullHS}} \label{tightness}
It is natural to wonder whether the bounds in Theorems~\ref{thm: match} and \ref{thm: chromindex} are tight.  In this section we explore this.  We begin with a lemma that will be useful for establishing lower bounds on certain proportional choice numbers.

\begin{lem} \label{lem: obstruct}
Suppose $G$ is a graph and for some $m \geq 2$ and $k \geq 2$, $\{A_1, \dots, A_m\}$ is a partition of $V(G)$ into blocks of size $k$.  If there is no independent set $I$ in $G$ with the property $|I \cap A_i| = 1$ for each $i \in [m]$, then $\chi_{pc}(G) > k$.
\end{lem}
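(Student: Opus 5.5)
We prove the contrapositive by building one bad $k$-assignment. Since $\chi_{pc}(G) \leq k$ would mean $G$ is proportionally $k$-choosable, it suffices to exhibit a $k$-assignment $L$ for which $G$ is not proportionally $L$-colorable. The natural choice uses $k$ colors per block and rotates them across the blocks, so that each color appears exactly once in each block. Concretely, write $A_i = \{a_{i,1}, \dots, a_{i,k}\}$ for each $i \in [m]$. Take distinct colors $c_1, \dots, c_k$ and let $L(a_{i,j}) = \{c_1, \dots, c_k\} \setminus \{c_j\} \cup \{d_{i,j}\}$? That would be unnecessarily complicated; instead the plan is to take $L(v) = \{c_1,\dots,c_k\}$ for every vertex, so that each $c_j$ has $\eta_L(c_j) = mk$ and must be used exactly $m$ times.

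This simple assignment is not obviously obstructed by the hypothesis, because the condition concerns transversals of the blocks. So the refined plan is to use block-specific colors. For each $j \in [k]$ introduce a color $c_j$, and for each block $A_i$ define lists such that color $c_1$ appears on all vertices, while the remaining $k-1$ list slots of each vertex in $A_i$ are filled with fresh colors $d_{i,1}, \dots, d_{i,k-1}$. These fresh colors are private to block $A_i$ but shared by all $k$ vertices of $A_i$. The multiplicities are then as follows:
\begin{itemize}
\item $\eta(c_1) = mk$, so $c_1$ must be used exactly $m$ times;
\item $\eta(d_{i,s}) = k$, so each $d_{i,s}$ must be used exactly once.
\end{itemize}
Thus in block $A_i$ exactly $k-1$ vertices receive the $k-1$ distinct colors $d_{i,s}$. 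The remaining vertex of $A_i$ must receive $c_1$, since it cannot reuse any $d_{i,s}$ (each is used exactly once), and its list contains only $c_1$ besides these. Hence $f^{-1}(c_1)$ meets every $A_i$ in exactly one vertex. This is consistent with the count $|f^{-1}(c_1)| = m$, which confirms the multiplicities line up. Because $f$ is proper, $f^{-1}(c_1)$ is an independent set $I$ with $|I \cap A_i| = 1$ for all $i$, contradicting the hypothesis. Each list has size exactly $k$ as required.

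The only delicate step is getting the counting right, in particular checking that the proportional bounds force exact usage ($\lfloor \eta/k \rfloor = \lceil \eta/k \rceil$ because $k \mid \eta$). From there, the pigeonhole argument places $c_1$ on exactly one vertex per block. The hypotheses $m \geq 2$ and $k \geq 2$ are not essential to this argument beyond ensuring that the fresh colors exist. Properness within a block is automatic, since each $d$-color is used once.
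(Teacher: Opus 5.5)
Your final construction is the paper's: one common color in every list, plus $k-1$ private colors for each block $A_i$ shared by all vertices of $A_i$. Forcing each private color to be used exactly once places the common color on exactly one vertex of each block, which gives the forbidden independent transversal, so the proof is correct (just delete the abandoned first attempts from your write-up).
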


\begin{proof}
Suppose that $\mathcal{P} = \{P_1, \ldots, P_m\}$ is an arbitrarily chosen partition of $\{2,3,\dots,m(k-1)+1\}$ into blocks of size $k-1$.  Now, let $L$ be the $k$-assignment for $G$ such that for each $i \in [m]$ and $v \in A_i$, $L(v) = \{1\} \cup P_i$.  Notice that $\eta_L(1) = mk$, and the multiplicity of every color in $\{2,3,\dots,m(k-1)+1\}$ is $k$.

Now, for the sake of contradiction, suppose that $f$ is a proportional $L$-coloring of $G$.  Notice that for each $i \in [m]$ and $c \in P_i$, since the color $c$ only appears in the lists corresponding to vertices in $A_i$, $f$ must color exactly one vertex in $A_i$ with $c$.  It follows that for each $i \in [m]$, $f$ colors exactly one vertex in $A_i$ with 1.  However, this implies that $f^{-1}(1)$ is an independent set in $G$ with the property $|f^{-1}(1) \cap A_i| = 1$ for each $i \in [m]$ which is a contradiction.  Consequently, no proportional $L$-coloring of $G$ exists and $\chi_{pc}(G) > k$.
\end{proof}

It is important to note at this point that for any given graph $G$ with $\Delta(G) \geq 2$, the largest $k$ for which we could hope to apply Lemma~\ref{lem: obstruct} is $k = 2 \Delta(G) - 1$.   Indeed, a result of Haxell~\cite{H01} implies that if $G$ is a graph and $\{A_1,\ldots,A_m\}$ is a partition of $V(G)$ with $|A_i| \geq 2\Delta(G)$ for each $i \in [m]$, then there is an independent set $I$ in $G$ such that $|I \cap A_i|=1$ for each $i \in [m]$. 

Before we begin proving Theorem~\ref{thm: Kab}, we mention one more result that we need for our proof.

\begin{lem}[\cite{KM19}] \label{lem: fullcomponents}
If $G$ is a graph such that its largest component has $k$ vertices, then $\chi_{pc}(G) \leq k$.
\end{lem}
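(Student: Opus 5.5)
We will prove the lemma by the same LP and total unimodularity technique used in the proof of Theorem~\ref{thm: match}, applied to a suitable flow network. Let $G$ have components $C_1,\dots,C_s$ with $|C_j|\le k$ for each $j$, and let $L$ be an arbitrary $k$-assignment for $G$ with palette $\mathcal{L}$. Every edge of $G$ lies inside a component, and each component has at most $k$ vertices. So it suffices to find a function $f$ with $f(v)\in L(v)$ for each $v$ such that:
\begin{itemize}
\item[(i)] $f$ is injective on each $C_j$;
\item[(ii)] $|f^{-1}(c)|\in\{\lfloor \eta(c)/k\rfloor,\lceil \eta(c)/k\rceil\}$ for each $c\in\mathcal{L}$.
\end{itemize}
Condition (i) already makes $f$ a proper coloring, and condition (ii) makes it proportional.

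To find $f$, we build a network with four layers: a source $s$, the vertices $v\in V(G)$, the pairs $(j,c)$ with $c\in\bigcup_{v\in C_j}L(v)$, the colors $c\in\mathcal{L}$, and a sink $t$. The arcs and bounds are as follows.
\begin{itemize}
\item Arc $s\to v$ has lower and upper bound $1$.
\item Arc $v\to (j,c)$, present whenever $v\in C_j$ and $c\in L(v)$, has capacity $1$.
\item Arc $(j,c)\to c$ has capacity $1$.
\item Arc $c\to t$ has lower bound $\lfloor\eta(c)/k\rfloor$ and upper bound $\lceil\eta(c)/k\rceil$.
\end{itemize}

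We first exhibit a fractional feasible flow. Send $1/k$ along each arc $v\to (j,c)$ with $c\in L(v)$.
\begin{itemize}
\item Each vertex $v$ receives exactly $|L(v)|/k=1$, as required by the arc $s\to v$.
\item Each pair $(j,c)$ carries at most $|C_j|/k\le 1$. This is the one place the hypothesis on component sizes is used.
\item Each color $c$ carries exactly $\eta(c)/k$, which lies between its lower and upper bound.
\end{itemize}

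Next we pass to an integral flow. The constraint matrix is the arc–node incidence matrix of a directed graph, which is totally unimodular, and all bounds are integers. So, exactly as in the proof of Theorem~\ref{thm: match} (Hoffman–Kruskal, or equivalently the integral flow theorem for networks with lower bounds), the nonempty bounded feasible polytope has an integral point. Taking such a $0/1$ flow, each $v$ sends its unit of flow to exactly one pair $(j,c)$, and we set $f(v)=c$. Then $f(v)\in L(v)$. Because the arc $(j,c)\to c$ has capacity $1$, no color is used twice within $C_j$, which gives (i). The bounds on the arcs $c\to t$ give (ii).

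The only delicate step is justifying integrality in the presence of lower bounds, namely the equality at each vertex and the floors at each color. We will handle it by writing every equality as two inequalities. This keeps the matrix totally unimodular, just as in Theorem~\ref{thm: match}, and nothing else requires real work. Hence $G$ is proportionally $L$-colorable for every $k$-assignment $L$, and $\chi_{pc}(G)\le k$.
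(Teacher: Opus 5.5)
Your proof is correct and complete. Note that the paper does not prove this lemma: it quotes it from \cite{KM19} and uses it only as a black box for the upper bound in Theorem~\ref{thm: Kab}, so there is no in-paper argument to compare against.

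Your network-flow argument is a self-contained substitute, and it uses the paper's own tool from the proof of Theorem~\ref{thm: match}. The uniform fractional flow of $1/k$ on each arc $v\to(j,c)$ plays the role of the choice $x_e=1/2$ there. The hypothesis $|C_j|\le k$ enters exactly once, to keep the load on each pair $(j,c)$ at most $1$. Integrality then follows from total unimodularity of the node--arc incidence matrix together with integer bounds. Injectivity on each component and the floor/ceiling counts then follow as you say.

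One detail deserves a sentence. Conservation constraints are imposed only at the internal nodes, not at $s$ and $t$. Alternatively, add a return arc $t\to s$ and work with a circulation. A row-submatrix of a totally unimodular matrix is still totally unimodular, so nothing changes.

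What your route buys is self-containment: a single integrality principle now underlies both of the paper's upper-bound tools, with no appeal to the earlier paper.
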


\begin{customthm}{\ref{thm: Kab}}
If $a,b \in \N$ satisfy $1 \leq a \leq b$ and $G=(a+b-1)K_{a,b}$, then $\chi_{pc}(G)=a+b = \Delta(G)+a$.
\end{customthm}

\begin{proof}
The result is clear when $a=b=1$. So, we may suppose that $a+b \geq 3$. Since each component of $G$ has $a+b$ vertices, we have $\chi_{pc}(G) \leq a+b$ by Lemma~\ref{lem: fullcomponents}. Now, we will use Lemma~\ref{lem: obstruct} to show that $\chi_{pc}(G)>a+b-1$.

Specifically, we will construct a partition of $V(G)$ into blocks of size $a+b-1$ satisfying the hypotheses of Lemma~\ref{lem: obstruct}. 
Our construction is an adaptation of Construction~3.3 of Szabó and Tardos~\cite{ST06}. The construction is inductive. Suppose that the components of $G$ are $G_0,\ldots,G_{a+b-2}$. For each such component $G_j$, suppose that $A_j,B_j$ is the bipartition of $G_j$ where $|A_j|=a$ and $|B_j|=b$.

First, note that there is no independent set in $G_0$ containing exactly one vertex from each of $A_0$ and $B_0$. Let $X_0=\{A_0\}$ and $Y_0=\{B_0\}$. Now proceed inductively for each $j \in [a-1]$ as follows. Suppose that $X_{j-1}$ and $Y_{j-1}$ have been constructed so that $X_{j-1} \cup Y_{j-1}$ is a partition of $V(\bigcup_{i=0}^{2j-2} G_i)$ such that no independent set in $G$ meets each block in exactly one vertex, and such that $X_{j-1}$ has exactly one block of size $a+j-1$, $Y_{j-1}$ has exactly one block of size $b+j-1$, and all other blocks have size $a+b-1$.

Let $Z$ be the unique block in $X_{j-1}$ of size $a+j-1$. Arbitrarily partition $Z$ into two sets $Z_1$ and $Z_2$ such that $|Z_1|=j$ and $|Z_2|=a-1$. Now, let
\[
X_j=(X_{j-1}-\{Z\})\cup\{Z_1\cup A_{2j-1},Z_2\cup B_{2j-1}\}.
\]
Similarly, let $W$ be the unique block in $Y_{j-1}$ of size $b+j-1$, and arbitrarily partition $W$ into two sets $W_1$ and $W_2$ such that $|W_1|=j$ and $|W_2|=b-1$. Let
\[
Y_j=(Y_{j-1}-\{W\})\cup\{W_1\cup B_{2j},W_2\cup A_{2j}\}.
\]
Notice that $X_j$ contains $j$ blocks of size $a+b-1$ and another block of size $a+j$. Also, $Y_j$ contains $j$ blocks of size $a+b-1$ and another block of size $b+j$.

We claim that $X_j \cup Y_j$ is a partition of $V(\bigcup_{i=0}^{2j} G_i)$ such that no independent set in $G$ meets each block in exactly one vertex. To see this, suppose for the sake of contradiction that $I$ is such an independent set. Since every vertex in $A_{2j-1}$ is adjacent to every vertex in $B_{2j-1}$, either the vertex in $I$ that is in $Z_1\cup A_{2j-1}$ or the vertex in $I$ that is in $Z_2\cup B_{2j-1}$ must belong to $Z$. Similarly, either the vertex in $I$ that is in $W_1\cup B_{2j}$ or the vertex in $I$ that is in $W_2\cup A_{2j}$ must belong to $W$. Keeping one such vertex from each of $Z$ and $W$, together with the vertices selected from all unchanged blocks, gives an independent set of $G$ with exactly one vertex in each block of $X_{j-1}\cup Y_{j-1}$ which is a contradiction.

Now, notice that every block in $Y_{a-1}$ has size $a+b-1$. Also, $X_{a-1}$ contains $a-1$ blocks of size $a+b-1$ and another block of size $2a-1$. If $a=b$, our construction is complete. Otherwise, we continue inductively for each $j \in \{a,\ldots,b-1\}$ as follows. Let $Z$ be the unique block in $X_{j-1}$ of size $a+j-1$, and arbitrarily partition $Z$ into two sets $Z_1$ and $Z_2$ such that $|Z_1|=j$ and $|Z_2|=a-1$. Let
\[
X_j=(X_{j-1}-\{Z\})\cup\{Z_1\cup A_{a+j-1},Z_2\cup B_{a+j-1}\}
\]
and let $Y_j=Y_{j-1}$. By the same argument used above for the construction of $X_j$, there is no independent set of $G$ with exactly one vertex in each block of $X_j\cup Y_j$. Moreover, $X_j$ contains $j$ blocks of size $a+b-1$ and another block of size $a+j$.

Consequently, $X_{b-1}\cup Y_{b-1}$ is a partition of $V(G)$ into blocks of size $a+b-1$ satisfying the hypotheses of Lemma~\ref{lem: obstruct}. By Lemma~\ref{lem: obstruct}, $\chi_{pc}(G)>a+b-1$, and the result follows.
\end{proof}

\begin{figure}[!htbp]
\centering
\begin{tikzpicture}[vertex/.style={circle,draw,fill=white,inner sep=0pt,minimum size=6pt},every node/.style={font=\small}]

\begin{scope}[xshift=0cm]
\draw (0,1.5)--(1.7,1.5);
\draw (0,1.5)--(1.7,0);
\draw (0,0)--(1.7,1.5);
\draw (0,0)--(1.7,0);
\node[vertex] (u1) at (0,1.5) {};
\node[vertex] (u3) at (0,0) {};
\node[vertex] (u2) at (1.7,1.5) {};
\node[vertex] (u4) at (1.7,0) {};
\node[above=4pt,align=center] at (u1) {$\{1,2,3\}$\\$u_1$};
\node[above=4pt,align=center] at (u2) {$\{1,6,7\}$\\$u_2$};
\node[below=4pt,align=center] at (u3) {$u_3$\\$\{1,4,5\}$};
\node[below=4pt,align=center] at (u4) {$u_4$\\$\{1,8,9\}$};
\end{scope}

\begin{scope}[xshift=4.5cm]
\draw (0,1.5)--(1.7,1.5);
\draw (0,1.5)--(1.7,0);
\draw (0,0)--(1.7,1.5);
\draw (0,0)--(1.7,0);
\node[vertex] (v1) at (0,1.5) {};
\node[vertex] (v3) at (0,0) {};
\node[vertex] (v2) at (1.7,1.5) {};
\node[vertex] (v4) at (1.7,0) {};
\node[above=4pt,align=center] at (v1) {$\{1,2,3\}$\\$v_1$};
\node[above=4pt,align=center] at (v2) {$\{1,4,5\}$\\$v_2$};
\node[below=4pt,align=center] at (v3) {$v_3$\\$\{1,2,3\}$};
\node[below=4pt,align=center] at (v4) {$v_4$\\$\{1,4,5\}$};
\end{scope}

\begin{scope}[xshift=9cm]
\draw (0,1.5)--(1.7,1.5);
\draw (0,1.5)--(1.7,0);
\draw (0,0)--(1.7,1.5);
\draw (0,0)--(1.7,0);
\node[vertex] (w1) at (0,1.5) {};
\node[vertex] (w3) at (0,0) {};
\node[vertex] (w2) at (1.7,1.5) {};
\node[vertex] (w4) at (1.7,0) {};
\node[above=4pt,align=center] at (w1) {$\{1,8,9\}$\\$w_1$};
\node[above=4pt,align=center] at (w2) {$\{1,6,7\}$\\$w_2$};
\node[below=4pt,align=center] at (w3) {$w_3$\\$\{1,8,9\}$};
\node[below=4pt,align=center] at (w4) {$w_4$\\$\{1,6,7\}$};
\end{scope}

\end{tikzpicture}

\caption{For $G=3C_4$ pictured above, a partition of $V(G)$ into four blocks of size three obtained from the construction in the proof of Theorem~\ref{thm: Kab} is $\{\{u_1,v_1,v_3\},\{u_3,v_2,v_4\},\{u_2,w_2,w_4\},\{u_4,w_1,w_3\}\}$.  The corresponding $3$-assignment $L$ is constructed as in the proof of Lemma~\ref{lem: obstruct}, and there is no proportional $L$-coloring of $G$.}

\label{fig:3C4lists}

\end{figure}

The result of Theorem~\ref{thm: Kab} for $a=1$ was already known (it follows from~\cite[Proposition 27]{KM19} and Lemma~\ref{lem: fullcomponents}).  Also, in~\cite{MH21} it is shown that $\chi_{pc}(K_{a,b})\leq b+1$ whenever $2\leq a\leq b$ and $a\leq\lfloor b/3\rfloor+2$, whereas Theorem~\ref{thm: Kab} gives $\chi_{pc}((a+b-1)K_{a,b})=a+b>b+1$ for these same values of $a$ and $b$.

For any $d\geq 2$, let $G=(2d-1)K_{d,d}$. By Theorem~\ref{thm: Kab}, we have $\chi_{pc}(G)=2d=2\Delta(G)$, which shows that the proposed upper bound of $\Delta(G)+1$ in Question~\ref{ques: fullHS} fails for every maximum degree above 1. Moreover, when $G=3C_4$, we have $\chi'(G)=2$ and $\chi_{pc}(G)=4=2^{\chi'(G)}$ which means the bound in Theorem~\ref{thm: chromindex} is tight when the chromatic index is 2. Figure~\ref{fig:3C4lists} illustrates a $3$-assignment $L$ of $G=3C_4$ arising from our construction above such that $G$ has no proportional $L$-coloring. Moreover, when $a=b=d$, our construction produces a partition into blocks of size $2d-1=2\Delta(G)-1$ such that no independent set meets each block in exactly one vertex. Recall that Haxell's result~\cite{H01} guarantees such an independent set whenever the blocks have size at least $2\Delta(G)$.

There are also counterexamples for Question~\ref{ques: fullHS} that are connected. For $d\geq 3$, let $H$ be obtained from a copy of $(2d-1)K_{d,d}$ by arbitrarily ordering the components and adding an edge between each pair of consecutive components, choosing the edges so that no vertex is incident with more than one added edge. Then, $H$ is connected with $\Delta(H)=d+1$. By Proposition~\ref{lem: monotone} and Theorem~\ref{thm: Kab}, we have $\chi_{pc}(H)\geq 2d>d+2=\Delta(H)+1$.

Finally, notice that if $G=(a+b-1)K_{a,b}$, where $1\leq a\leq b$, then $\Delta(G)=b$ and $\col(G)=a+1$. Consequently, Theorem~\ref{thm: Kab} gives $\chi_{pc}(G)=\Delta(G)+\col(G)-1$. This motivates Conjecture~\ref{conj: upper} which states that $\chi_{pc}(G)\leq\Delta(G)+\col(G)-1$ for every graph $G$ with $\Delta(G)\geq1$. If true, this conjecture would imply that $\chi_{pc}(P_n)=3$ for every $n\geq6$, answering Question~\ref{ques: proportionalpaths1}.

\par\medskip
\noindent\textbf{Acknowledgment.} The author is grateful to Alexandr Kostochka who posed Question~\ref{ques: proportionalpaths2} to the author in February of 2018.

\par\medskip
\noindent\textbf{Declaration of generative AI and AI-assisted technologies in the manuscript preparation process.} During the preparation of this manuscript, the author used ChatGPT (OpenAI) and Claude (Anthropic) to discuss and examine mathematical arguments, explore possible connections, assist with the editing and presentation of the manuscript, generate TikZ code for a figure, and check citations. The author reviewed and revised AI-assisted material and takes full responsibility for the final content of the manuscript.

\bibliographystyle{hplain}
\bibliography{bibliography}{}

\end{document}